\documentclass[a4paper,10pt]{amsart}
\usepackage{amsmath,amssymb,amsthm}
\usepackage{epsfig}
\usepackage{graphicx}
\sloppy

\newcommand{\CM}{\overline{\operatorname{\mathcal{M}}}}

\newcommand{\LL}{\operatorname{\mathcal{L}}}

\newcommand{\PP}{\operatorname{\mathfrak{P}}}
\newcommand{\WW}{\operatorname{\mathfrak{W}}}

\newcommand{\DD}{\operatorname{\mathfrak{D}}}

\newcommand{\Si}{\dot{S}}

\newcommand{\ev}{\operatorname{ev}}

\newcommand{\PD}{\operatorname{PD}}

\newcommand{\CZ}{\operatorname{CZ}}

\newcommand{\del}{\partial}

\newcommand{\Ju}{\underline{J}}

\newcommand{\IC}{\operatorname{\mathbb{C}}}

\newcommand{\IR}{\operatorname{\mathbb{R}}}
\newcommand{\IN}{\operatorname{\mathbb{N}}}
\newcommand{\ID}{\operatorname{\mathbf{D}}}
\newcommand{\IH}{\operatorname{\mathbf{H}}}

\newcommand{\Ih}{\operatorname{\mathbf{h}}}

\newcommand{\IF}{\operatorname{\mathbf{F}}}

\newtheorem{theorem}{Theorem}[section]

\newtheorem{corollary}[theorem]{Corollary}

\title{String, dilaton and divisor equation\\ in Symplectic Field Theory}
\author{Oliver Fabert and Paolo Rossi}
\pagestyle{myheadings}

\begin{document}

\maketitle

\begin{abstract}
Infinite dimensional Hamiltonian systems appear naturally in the rich algebraic structure of Symplectic Field Theory. 
Carefully defining a generalization of gravitational descendants and adding them to the picture, one can produce an 
infinite number of symmetries of such systems . As in Gromov-Witten theory, 
the study of the topological meaning of gravitational descendants yields new differential equations for the 
SFT Hamiltonian, where the key point is to understand the dependence of the algebraic constructions 
on choices of auxiliary data like contact form, cylindrical almost complex structure, abstract perturbations, 
differential forms and coherent collections of sections used to define gravitational descendants.
\end{abstract}

\tableofcontents

\markboth{O. Fabert and P. Rossi}{String equation in SFT} 

\section{Introduction}
Symplectic field theory (SFT), introduced by H. Hofer, A. Givental and Y. Eliashberg in 2000 ([EGH]), is a very large 
project and can be viewed as a topological quantum field theory approach to Gromov-Witten theory. Besides providing a 
unified view on established pseudoholomorphic curve theories like symplectic Floer homology, contact homology and 
Gromov-Witten theory, it leads to numerous new applications and opens new routes yet to be explored. \\
 
While symplectic field theory leads to algebraic invariants with very rich algebraic structures, it was pointed out by Eliashberg in his ICM 2006 plenary talk ([E]) 
that the integrable systems of rational Gromov-Witten theory very naturally appear in rational symplectic field 
theory by using the link between the rational symplectic field theory of prequantization spaces in the 
Morse-Bott version and the rational Gromov-Witten potential of the underlying symplectic manifold, see the recent papers [R1], 
[R2] by the second author. Indeed, after introducing gravitational descendants as in Gromov-Witten theory, it is precisely the rich algebraic 
formalism of SFT with its Weyl and Poisson structures that provides a natural link between symplectic field theory 
and (quantum) integrable systems. \\ 
 
Carefully defining a generalization of gravitational descendants and adding them to the picture, the first author has 
shown in [F] that one can assign to every contact manifold an infinite sequence of commuting Hamiltonian systems on 
SFT homology and the question of their integrability arises. For this it is important to fully understand the algebraic structure of 
gravitational descendants in SFT. While it is well-known that in Gromov-Witten theory the topological meaning of 
gravitational descendants leads to new differential equations for the Gromov-Witten potential, it is interesting to 
ask how these rich algebraic structures carry over from Gromov-Witten theory to symplectic field theory. \\

As a first step, we will show in this paper how the well-known string, dilaton and divisor equations generalize from 
Gromov-Witten theory to symplectic field theory, where the key point is the covariance of the algebraic constructions 
under choices of auxiliary data like contact form, cylindrical almost complex structure, abstract perturbations and 
coherent collections of sections used to define gravitational descendants. It will turn that we obtained the same equations as 
in Gromov-Witten theory (up to contributions of constant curves), but these however only hold after passing to SFT homology. \\

Most of this paper was written when both authors were members of the Mathematical Sciences Research Institute (MSRI) in Berkeley 
and it was finished when the first author was a postdoc at the Max Planck Institute (MPI) for Mathematics in the Sciences in Germany 
and the second author was a postdoc at the Institut de Mathematiques de Jussieu, Paris VI. They want to thank the institutes for their hospitality and their great working environment. Further they want to thank 
Y. Eliashberg, A. Givental, J. Latschev and D. Zvonkine for useful discussions.
 
\section{SFT and commuting quantum Hamiltonian systems}
Symplectic field theory (SFT) is a very large project, initiated by Eliashberg,
Givental and Hofer in their paper [EGH], designed to describe in a unified way 
the theory of pseudoholomorphic curves in symplectic and contact topology. 
Besides providing a unified view on well-known theories like symplectic Floer 
homology and Gromov-Witten theory, it shows how to assign algebraic invariants 
to closed contact manifolds $(V,\xi=\{\lambda=0\})$: \\
   
Recall that a contact one-form $\lambda$ defines a vector field $R$ on $V$ by 
$R\in\ker d\lambda$ and $\lambda(R)=1$, which 
is called the Reeb vector field. We assume that 
the contact form is Morse in the sense that all closed orbits of the 
Reeb vector field are nondegenerate in the sense of [BEHWZ]; in particular, the set 
of closed Reeb orbits is discrete. The invariants are defined by counting 
$\Ju$-holomorphic curves in $\IR\times V$ which are asymptotically cylindrical over 
chosen collections of Reeb orbits $\Gamma^{\pm}=\{\gamma^{\pm}_1,...,
\gamma^{\pm}_{n^{\pm}}\}$ as the $\IR$-factor tends to $\pm\infty$, see [BEHWZ]. 
The almost complex structure $\Ju$ on the cylindrical 
manifold $\IR\times V$ is required to be cylindrical in the sense that it is  
$\IR$-independent, links the two natural vector fields on $\IR\times V$, namely the 
Reeb vector field $R$ and the $\IR$-direction $\del_s$, by $\Ju\del_s=R$, and turns 
the distribution $\xi$ on $V$ into a complex subbundle of $TV$, 
$\xi=TV\cap \Ju TV$. We denote by $\CM_{g,r,A}(\Gamma^+,\Gamma^-)/\IR$ the corresponding compactified
moduli space of genus $g$ curves with $r$ additional marked points representing the absolute homology class 
$A\in H_2(V)$ using a choice of spanning surfaces ([BEHWZ],[EGH]). 
Possibly after choosing abstract perturbations using polyfolds following [HWZ], we get that 
$\CM_{g,r,A}(\Gamma^+,\Gamma^-)$ is a (weighted branched) manifold with corners of dimension 
equal to the Fredholm index of the Cauchy-Riemann operator for $\Ju$. 
{\it Note that as in [F] we will not discuss transversality for the Cauchy-Riemann operator but just refer to the upcoming 
papers on polyfolds by H. Hofer and his co-workers.} \\  
 
Let us now briefly introduce the algebraic formalism of SFT as described in [EGH]: \\
 
Recall that a multiply-covered Reeb orbit $\gamma^k$ is called bad if 
$\CZ(\gamma^k)\neq\CZ(\gamma)\mod 2$, where $\CZ(\gamma)$ denotes the 
Conley-Zehnder index of $\gamma$. Calling a Reeb orbit $\gamma$ {\it good} if it is not bad we assign to every 
good Reeb orbit $\gamma$ two formal graded variables $p_{\gamma},q_{\gamma}$ with grading 
\begin{equation*} 
|p_{\gamma}|=m-3-\CZ(\gamma),|q_{\gamma}|=m-3+\CZ(\gamma) 
\end{equation*} 
when $\dim V = 2m-1$. Assuming we have chosen a basis $A_0,...,A_N$ of $H_2(V)$, we assign to every $A_i$ a formal 
variables $z_i$ with grading $|z_i|=- 2 c_1(A_i)$. In order to include higher-dimensional moduli spaces we further assume that a string 
of closed (homogeneous) differential forms $\Theta=(\theta_1,...,\theta_N)$ on $V$ is chosen and assign to 
every $\theta_{\alpha}\in\Omega^*(V)$ a formal variables $t_{\alpha}$ 
with grading
\begin{equation*} |t_{\alpha}|=2 -\deg\theta_{\alpha}. \end{equation*}  
Finally, let $\hbar$ be another formal variable of degree $|\hbar|=2(m-3)$. \\

Let $\WW$ be the graded Weyl algebra over $\IC$ of power series in the variables 
$\hbar,p_{\gamma}$ and $t_i$ with coefficients which are polynomials in the 
variables $q_{\gamma}$ and $z_n$, which is equipped with the associative product $\star$ in 
which all variables super-commute according to their grading except for the 
variables $p_{\gamma}$, $q_{\gamma}$ corresponding to the same Reeb orbit $\gamma$, 
\begin{equation*} [p_{\gamma},q_{\gamma}] = 
                  p_{\gamma}\star q_{\gamma} -(-1)^{|p_{\gamma}||q_{\gamma}|} 
                  q_{\gamma}\star p_{\gamma} = \kappa_{\gamma}\hbar.
\end{equation*}
($\kappa_{\gamma}$ denotes the multiplicity of $\gamma$.) Since it is shown in [EGH] that the bracket 
of two elements in $\WW$ gives an element in $\hbar\WW$, it follows that we get a bracket on the module 
$\hbar^{-1}\WW$. Following [EGH] we further introduce 
the Poisson algebra $\PP$ of formal power series in the variables $p_{\gamma}$ and $t_i$ with   
coefficients which are polynomials in the variables $q_{\gamma}$ with Poisson bracket given by 
\begin{equation*} 
 \{f,g\} = \sum_{\gamma}\kappa_{\gamma}\Bigl(\frac{\del f}{\del p_{\gamma}}\frac{\del g}{\del q_{\gamma}} -
                          (-1)^{|f||g|}\frac{\del g}{\del p_{\gamma}}\frac{\del f}{\del q_{\gamma}}\Bigr).       
\end{equation*}

As in Gromov-Witten theory we want to organize all moduli spaces $\CM_{g,r,A}(\Gamma^+,\Gamma^-)$
into a generating function $\IH\in\hbar^{-1}\WW$, called {\it Hamiltonian}. In order to include also higher-dimensional 
moduli spaces, in [EGH] the authors follow the approach in Gromov-Witten theory to integrate the chosen differential forms 
$\theta_{\alpha}$ over the moduli spaces after pulling them back under the evaluation map from target manifold $V$. 
The Hamiltonian $\IH$ is then defined by
\begin{equation*}
 \IH = \sum_{\Gamma^+,\Gamma^-} \int_{\CM_{g,r,A}(\Gamma^+,\Gamma^-)/\IR}
 \ev_1^*\theta_{\alpha_1}\wedge...\wedge\ev_r^*\theta_{\alpha_r}\; \hbar^{g-1}t^Ip^{\Gamma^+}q^{\Gamma^-}z^d
\end{equation*}
with $t^{\alpha}=t_{\alpha_1}...t_{\alpha_r}$, $p^{\Gamma^+}=p_{\gamma^+_1}...p_{\gamma^+_{n^+}}$, 
$q^{\Gamma^-}=q_{\gamma^-_1}...q_{\gamma^-_{n^-}}$ and $z^d = z_0^{d_0} \cdot ... \cdot z_N^{d_N}$. 
Expanding 
\begin{equation*} \IH=\hbar^{-1}\sum_g \IH_g \hbar^g \end{equation*} 
we further get a rational Hamiltonian $\Ih=\IH_0\in\PP$, which counts only curves with genus zero. \\

While the Hamiltonian $\IH$ explicitly depends on the chosen contact form, the cylindrical almost complex structure, 
the differential forms and abstract polyfold perturbations making all moduli spaces regular, it is outlined in [EGH] 
how to construct algebraic invariants, which just depend on the contact structure and the cohomology classes of the 
differential forms. \\
  
In complete analogy to Gromov-Witten theory we can introduce $r$ tautological line 
bundles $\LL_1,...,\LL_r$ over each moduli space $\CM_r=\CM_{g,r,A}(\Gamma^+,\Gamma^-)/\IR$ , where the fibre of $\LL_i$ 
over a punctured curve $(u,\Si)\in\CM_r$ is again given 
by the cotangent line to the underlying, possibly unstable nodal Riemann surface (without ghost components) at the 
$i$.th marked point and which again formally can be defined as the pull-back of the vertical cotangent line 
bundle of $\pi: \CM_{r+1}\to\CM_r$ under the canonical section $\sigma_i: \CM_r\to\CM_{r+1}$ mapping to the $i$.th marked 
point in the fibre. Note again that while the vertical cotangent line bundle is rather a sheaf (the dualizing sheaf) than a true bundle since 
it becomes singular at the nodes in the fibres, the pull-backs under the canonical sections are still true line bundles 
as the marked points are different from the nodes and hence these sections avoid the singular loci. \\

While in Gromov-Witten theory the gravitational descendants were defined by integrating powers of the first Chern class 
of the tautological line bundle over the moduli space, which by Poincare duality corresponds to counting common zeroes of 
sections in this bundle, in symplectic field theory, more generally every holomorphic curves theory where curves with 
punctures and/or boundary are considered, we are faced with the problem that the moduli spaces generically have 
codimension-one boundary, so that the count of zeroes of sections in general depends on the chosen sections in the 
boundary. It follows that the integration of the first Chern class of the tautological line bundle over a single moduli 
space has to be replaced by a construction involving all moduli space at once. Note that this is similar to the choice of 
coherent abstract perturbations for the moduli spaces in symplectic field theory in order to achieve transversality for 
the Cauchy-Riemann operator. \\

Keeping the interpretation of descendants as common zero sets of sections in powers of the 
tautological line bundles, the first author defined in his paper [F] the notion of {\it coherent collections of sections} 
$(s)$ in the tautological line bundles over all moduli spaces, which just formalizes how the sections chosen for the 
lower-dimensional moduli spaces should affect the section chosen for a moduli spaces on its boundary. Based on this he then 
defined {\it descendants of moduli spaces} $\CM^j\subset\CM$, which were obtained inductively as zero sets of these coherent 
collections of sections $(s_j)$ in the tautological line bundles over the descendant moduli spaces $\CM^{j-1}\subset\CM$. \\

So far we have only considered the case with one additional marked point. On the other hand, as already outlined in [F], 
the general case with $r$ additional marked points is just notationally more involved. Indeed, we can 
easily define for every moduli space $\CM_r=\CM_{g,r,A}(\Gamma^+,\Gamma^-)/\IR$ with $r$ additional marked points and every 
$r$-tuple of natural numbers $(j_1,...,j_r)$ descendants $\CM^{(j_1,...,j_r)}_r\subset\CM_r$ by setting
\begin{equation*} \CM^{(j_1,...,j_r)}_r = \CM^{(j_1,0,...,0)}_r\cap ... \cap \CM^{(0,...,0,j_r)}_r, \end{equation*}
where the descendant moduli spaces $\CM^{(0,...,0,j_i,0,...,0)}_r\subset\CM_r$ are defined in the same way as the 
one-point descendant moduli spaces $\CM^{j_i}_1\subset\CM_1$ by looking at the $r$ tautological line bundles $\LL_{i,r}$ 
over the moduli space $\CM_r = \CM_r(\Gamma^+,\Gamma^-)/\IR$ separately. In other words, we inductively choose generic 
sections $s^j_{i,r}$ in the line bundles $\LL_{i,r}^{\otimes j}$ to define $\CM^{(0,...,0,j,0,...,0)}_r=
(s^j_{i,r})^{-1}(0)\subset\CM^{(0,...,0,j-1,0,...,0)}_r\subset\CM_r$.  \\

With this we can define the descendant Hamiltonian of SFT, which we will continue denoting by $\IH$, while the 
Hamiltonian defined in [EGH] will from now on be called {\it primary}. In order to keep track of the descendants we 
will assign to every chosen differential form $\theta_i$ now a sequence of formal variables $t_{i,j}$ with grading 
\begin{equation*} |t_{i,j}|=2(1-j) -\deg\theta_i. \end{equation*} 
Then the descendant Hamiltonian $\IH\in\hbar^{-1}\WW$ of SFT is defined by 
\begin{equation*}
 \IH = \sum_{\Gamma^+,\Gamma^-,I} \int_{\CM^{(j_1,...,j_r)}_{g,r,A}(\Gamma^+,\Gamma^-)/\IR}
 \ev_1^*\theta_{\alpha_1}\wedge...\wedge\ev_r^*\theta_{\alpha_r}\; \hbar^{g-1}t^Ip^{\Gamma^+}q^{\Gamma^-},
\end{equation*}
where $p^{\Gamma^+}=p_{\gamma^+_1} ... p_{\gamma^+_{n^+}}$, $q^{\Gamma^-}=q_{\gamma^-_1} ... q_{\gamma^-_{n^-}}$ and 
$t^{\alpha,j}=t_{\alpha_1,j_1} ... t_{\alpha_r,j_r}$.\\

We want to emphasize that the following statement is not yet a theorem in the strict mathematical sense as the analytical 
foundations of symplectic field theory, in particular, the neccessary transversality theorems for the Cauchy-Riemann 
operator, are not yet fully established. Since it can be expected that the polyfold project by Hofer and his 
collaborators sketched in [HWZ] will provide the required transversality theorems, we follow other papers in the field in 
proving everything up to transversality and state it nevertheless as a theorem. \\
\\
{\bf Theorem:} {\it Differentiating the Hamiltonian $\IH\in\hbar^{-1}\WW$ with respect to the formal variables $t_{\alpha,p}$ 
defines a sequence of quantum Hamiltonian} 
\begin{equation*} \IH_{\alpha,p}=\frac{\del\IH}{\del t^{\alpha,p}} \in H_*(\hbar^{-1}\WW,[\IH,\cdot]) \end{equation*} 
{\it in the full SFT homology algebra with differential $D=[\IH,\cdot]: \hbar^{-1}\WW\to\hbar^{-1}\WW$, which commute with respect to the 
bracket on $H_*(\hbar^{-1}\WW,[\IH,\cdot])$,} 
\begin{equation*} [\IH_{\alpha,p},\IH_{\beta,q}] = 0,\; (\alpha,p),(\beta,q)\in\{1,...,N\}\times\IN. \end{equation*}
\\
Everything is an immediate consequence of the master equation $[\IH,\IH]=0$, which can be proven in the same 
way as in the case without descendants using the results in [F]. While the boundary equation $D\circ D=0$ is well-known 
to follow directly from the identity $[\IH,\IH]=0$, the fact that every $\IH_{\alpha,p}$, $(\alpha,p)\in
\{1,...,N\}\times\IN$ defines an element in the homology $H_*(\hbar^{-1}\WW,[\IH,\cdot])$ follows from the identity
\begin{equation*} [\IH,\IH_{\alpha,p}] = 0,\end{equation*} 
which can be shown by differentiating the master equation with respect to the $t_{\alpha,p}$-variable and using the 
graded Leibniz rule,
\[ \frac{\del}{\del t_{\alpha,p}} [f,g] = 
   [\frac{\del f}{\del t_{\alpha,p}},g] + (-1)^{|t_{\alpha,p}||f|} [f,\frac{\del g}{\del t_{\alpha,p}}]. \]
On the other hand, in order to see that any two $\IH_{\alpha,p}$, 
$\IH_{\beta,q}$ commute {\it after passing to homology} it suffices to see that by differentiating twice (and using that all 
summands in $\IH$ have odd degree) we get the identity
\begin{equation*} 
 [\IH_{\alpha,p},\IH_{\beta,q}]+(-1)^{|t_{\alpha,p}|}[\IH,\frac{\del^2\IH}{\del t_{\alpha,p}\del t_{\beta,q}}] = 0. 
\end{equation*} 

Let $\WW^0$ be the graded Weyl algebra over $\IC$, which is obtained from the big Weyl algebra $\WW$ by setting all 
variables $t_{\alpha,p}$ equal to zero. Apart from the fact that the Hamiltonian $\IH^0 = \IH|_{t=0}\in\hbar^{-1}\WW^0$ now 
counts only curves with no additional marked points, the new SFT Hamiltonians $\IH^1_{\alpha,p}=\IH_{\alpha,p}|_{t=0}
\in\hbar^{-1}\WW^0$, $(\alpha,p)\in\{1,...,N\}\times \IN$ now count holomorphic curves with one marked point. In other words,
specializing at $t=0$ we get back the following theorem proven in [F]. \\
\\
{\bf Theorem:} {\it Counting holomorphic curves with one marked point after integrating differential forms and introducing gravitational descendants defines a sequence of distinguished elements} 
\begin{equation*} \IH^1_{\alpha,p}\in H_*(\hbar^{-1}\WW^0,D^0) \end{equation*} 
{\it in the full SFT homology algebra with differential $D^0=[\IH^0,\cdot]: \hbar^{-1}\WW^0\to\hbar^{-1}\WW^0$, which commute with respect to the bracket on $H_*(\hbar^{-1}\WW^0,D^0)$,} 
\begin{equation*} [\IH^1_{\alpha,p},\IH^1_{\beta,q}] = 0,\; (\alpha,p),(\beta,q)\in\{1,...,N\}\times\IN. \end{equation*}

We now turn to the question of independence of these nice algebraic structures from the choices like contact form, 
cylindrical almost complex structure, abstract polyfold perturbations and, of course, the choice of the coherent 
collection of sections. This is the content of the following theorem, where we however again want to emphasize that 
the following statement is not yet a theorem in the strict mathematical sense as the analytical foundations of symplectic 
field theory, in particular, the neccessary transversality theorems for the Cauchy-Riemann operator, are not yet fully 
established. \\   
\\
{\bf Theorem:} {\it For different choices of contact form $\lambda^{\pm}$, cylindrical almost complex structure 
$\Ju^{\pm}$ , abstract polyfold perturbations and sequences of coherent collections of sections $(s^{\pm}_j)$ the 
resulting systems of commuting operators $\IH^-_{\alpha,p}$ on $H_*(\hbar^{-1}\WW^-,D^-)$ and $\IH^+_{\alpha,p}$ on 
$H_*(\hbar^{-1}\WW^+,D^+)$ are isomorphic, i.e., there exists an isomorphism of the Weyl algebras $H_*(\hbar^{-1}\WW^-,D^-)$ 
and $H_*(\hbar^{-1}\WW^+,D^+)$ which maps $\IH^-_{\alpha,p}\in H_*(\hbar^{-1}\WW^-,D^-)$ to 
$\IH^+_{\alpha,p}\in H_*(\hbar^{-1}\WW^+,D^+)$.} \\

Specializing at $t=0$ we again get back the theorem proven in [F]. \\
\\
{\bf Theorem:} {\it For different choices of contact form $\lambda^{\pm}$, cylindrical almost complex structure 
$\Ju^{\pm}$ , abstract polyfold perturbations and sequences of coherent collections of sections $(s^{\pm}_j)$ the 
resulting systems of commuting operators $\IH^{1,-}_{\alpha,p}$ on $H_*(\hbar^{-1}\WW^{0,-},D^{0,-})$ and $\IH^{1,+}_{\alpha,p}$ on 
$H_*(\hbar^{-1}\WW^{0,+},D^{0,+})$ are isomorphic, i.e., there exists an isomorphism of the Weyl algebras $H_*(\hbar^{-1}\WW^{0,-},D^{0,-})$ 
and $H_*(\hbar^{-1}\WW^{0,+},D^{0,+})$ which maps $\IH^{1,-}_{\alpha,p}\in H_*(\hbar^{-1}\WW^{0,-},D^{0,-})$ to 
$\IH^{1,+}_{\alpha,p}\in H_*(\hbar^{-1}\WW^{0,+},D^{0,+})$.} \\

For the proof observe that in [F] the first author introduced the notion of a collection of sections $(s_j)$ in the 
tautological line bundles over all moduli spaces of holomorphic curves in the cylindrical cobordism interpolating between the
auxiliary structures which are {\it coherently connecting} the two coherent collections of sections $(s^{\pm}_j)$. \\ 

In order to prove the above invariance theorem we now recall the extension of the algebraic formalism of SFT from 
cylindrical manifolds to symplectic cobordisms with cylindrical ends as described in [EGH]. \\

Let $\DD$ be the space of formal power series in the variables $\hbar,p^+_{\gamma}$ with coefficients which are polynomials in the 
variables $q^-_{\gamma}$. Elements in $\WW^{\pm}$ then act as differential operators from the right/left on 
$\DD$ via the replacements 
\begin{equation*} 
  q^+_{\gamma}\mapsto \kappa_{\gamma}\hbar\overleftarrow{\frac{\del}{\del p^+_{\gamma}}},\;\;
  p^-_{\gamma}\mapsto\kappa_{\gamma}\hbar\overrightarrow{\frac{\del}{\del q^-_{\gamma}}}.
\end{equation*}

In the very same way as we followed [EGH] and defined the Hamiltonians $\IH^{\pm}$ counting holomorphic curves in the cylindrical manifolds 
$V^{\pm}$ with contact forms $\lambda^{\pm}$, cylindrical almost complex structures $\Ju^{\pm}$, abstract perturbations and 
coherent collections of sections $(s_j^{\pm})$, we now define a potential $\IF\in\hbar^{-1}\DD$ counting holomorphic 
curves in the symplectic cobordism $W$ between the contact manifolds $V^{\pm}$ with interpolating auxiliary data, in particular, 
using the collection of sections $(s_j)$ coherently connecting $(s_j^{\pm})$. \\

Along the lines of the proof in [EGH], it follows that we have the fundamental identity  
\begin{equation*} e^{\IF}\overleftarrow{\IH^+} - \overrightarrow{\IH^-}e^{\IF} = 0 \end{equation*}
In the same way as in [EGH] this implies that 
\begin{equation*}
  D^{\IF}: \hbar^{-1}\DD\to\hbar^{-1}\DD,\; 
  D^{\IF}g = e^{-\IF}\overrightarrow{\IH^-}(ge^{\IF}) - (-1)^{|g|}(ge^{\IF})\overleftarrow{\IH^+}e^{-\IF} 
\end{equation*}
satisfies $D^{\IF}\circ D^{\IF} = 0$ and hence can be used to define the homology algebra 
$H_*(\hbar^{-1}\DD,D^{\IF})$. Furthermore it is shown that the maps 
\begin{eqnarray*}
  &&F^-: \hbar^{-1}\WW^-\to\hbar^{-1}\DD,\; f\mapsto e^{-\IF}\overrightarrow{f}e^{+\IF}, \\
  &&F^+: \hbar^{-1}\WW^+\to\hbar^{-1}\DD,\; f\mapsto e^{+\IF}\overleftarrow{f}e^{-\IF}
\end{eqnarray*}
commute with the boundary operators,
\begin{equation*} F^{\pm}\circ D^{\pm} = D^{\IF}\circ F^{\pm}, \end{equation*} 
and hence descend to maps between the homology algebras
\begin{equation*} F^{\pm}_*: H_*(\hbar^{-1}\WW^{\pm},D^{\pm})\to H_*(\hbar^{-1}\DD,D^{\IF}), \end{equation*}
where it can be shown as in [EGH] that both maps are isomorphisms if $W=\IR\times V$ and the contact forms $\lambda^{\pm}$ 
induce the same contact structure $\xi = \ker \lambda^{\pm}$. \\ 

On the other hand, differentiating the potential $\IF\in\hbar^{-1}\DD$ and the two Hamiltonians $\IH^{\pm}\in\hbar^{-1}\WW^{\pm}$ with 
respect to the $t_{\alpha,p}$-variables, we get also the identity
\begin{equation*} 
   e^{\IF}\overleftarrow{\IH^+_{\alpha,p}} - \overrightarrow{\IH^-_{\alpha,p}}e^{\IF} = 
   (-1)^{|t_{\alpha,p}|} \overrightarrow{\IH^-}(e^{\IF}\IF_{\alpha,p}) - (e^{\IF}\IF_{\alpha,p})\overleftarrow{\IH^+}, 
\end{equation*} 
about $\IF$, $\IF_{\alpha,p}=\frac{\del\IF}{\del t_{\alpha,p}}$ and $\IH^{\pm}$, $\IH^{\pm}_{\alpha,p}$, where we 
used that all summands in $\IH^{\pm}$ ($\IF$) have odd (even) degree and 
\begin{equation*} 
  \frac{\del}{\del t_{\alpha,p}}e^{\IF}= e^{\IF} \IF_{\alpha,p}. 
\end{equation*} 
On the other hand, it is easy to see that the above identity implies that
\begin{equation*} 
    F^+(\IH^+_{\alpha,p}) - F^-(\IH^-_{\alpha,p}) = 
    e^{+\IF}\overleftarrow{\IH^+_{\alpha,p}}e^{-\IF} - e^{-\IF}\overrightarrow{\IH^-_{\alpha,p}}e^{+\IF} 
\end{equation*}
is equal to  
\begin{equation*}
  (-1)^{|t_{\alpha,p}|} e^{-\IF}\overrightarrow{\IH^-}(e^{+\IF}\IF_{\alpha,p}) - (e^{+\IF}\IF_{\alpha,p})\overleftarrow{\IH^+}e^{-\IF} 
  = (-1)^{|t_{\alpha,p}|} D^{\IF}(\IF_{\alpha,p}),
\end{equation*}
so that, after passing to homology, we have
\begin{equation*}  F^+_*(\IH^+_{\alpha,p}) = F^-_*(\IH^-_{\alpha,p}) \in H_*(\hbar^{-1}\DD,D^{\IF}) \end{equation*}
as desired. \\

\section{Divisor, dilaton and string equations in SFT}
The goal of this paper is to understand how the well-known divisor, dilaton and string equations from Gromov-Witten 
theory generalize to symplectic field theory. Here the main problem is to deal with the fact that the SFT Hamiltonian 
itself is not an invariant for the contact manifold. More precisely it depends not only on choices like contact form, 
cylindrical almost complex structure and coherent abstract perturbations but also on the chosen differential forms 
$\theta_i$ and coherent collections of sections $(s_j)$ used to define gravitational descendants. The main application 
of these equations we have in mind is the computation of the sequence of commuting quantum Hamiltonians 
$\IH_{\alpha,p}=\frac{\del\IH}{\del t^{\alpha,p}}$ on SFT homology $H_*(\hbar^{-1}\WW,D)$ introduced in the last section.

\vspace{0.5cm}

\subsection{Special non-generic coherent collections of sections}
In order to prove the desired equations we will start with special non-generic choices of coherent collections of 
sections in the tautological bundles $\LL_{i,r}$ over all moduli spaces $\CM_r=\CM_{g,r,A}(\Gamma^+,\Gamma^-)/\IR$. \\

The first assumption we will make is about the choice of sections in the tautological line bundles $\LL_{1,1}$ over 
the simplest moduli spaces $\CM_{0,1}(\gamma,\gamma)/\IR\cong S^1$ of orbit cylinders with one marked point. Observing that 
$\LL_{1,1}$ has a natural trivialization by canonically identifying $\CM_{0,1}(\gamma,\gamma)/\IR$ with the target Reeb 
orbit $\gamma$ and the bundle itself with the cotangent bundle to $\IR\times\gamma$, we want to assume that the section in $\LL_{1,1}$ is 
constant in this trivialization. \\

This choice has a nice consequence. For this consider the generic fibre 
$F_{(u,\Si)}=\pi_r^{-1}((u,\Si)) \in \CM_{g,r,A}(\Gamma^+,\Gamma^-)/\IR$ of the forgetful fibration 
$\pi_r$, where $\Si$ is a marked, punctured Riemann surface and $u$ is the holomorphic map to $\IR\times V$. 
Such fibre is isomorphic to $\bar{S}$, where $\bar{S}$ is the compact 
Riemann surface with boundary obtained from $\Si$ by compactifying each puncture to a circle, which itself corresponds to a 
copy of the moduli space $\CM_{0,1}(\gamma,\gamma)/\IR$ of cylinders over the corresponding Reeb orbit via 
the boundary gluing map. \\

Now observe that the restriction of $\LL_{r,r}$ to the fibre $F_{(u,\Si)}$ coincides with the cotangent bundle to $F_{(u,\Si)}$ 
away from the marked points. A section can be then pulled back to $\LL_{r,r}$ from the cotangent bundle itself using coordinate identification away from the marked points, where the section developes a pole of degree one. This means that a smooth section can be chosen to have a zero of index $-1$ at the marked points. With our assumption on the section in $\LL_{1,1}$ over 
each moduli space $\CM_{0,1}(\gamma,\gamma)/\IR$ we then guarantee that a coherent smooth section of $\LL_{r,r}$, when 
restricted to $F_{(u,\Si)}$, also has a singular point of index $-1$ at the punctures. In order to see this, observe that 
the gluing map at the punctures indeed agrees with the identification of $\LL_{1,1}$ with the cotangent 
bundle to $\IR\times\gamma$.\\

Moreover we will need the analogue of the following comparison formula for $\psi$-classes in Gromov-Witten theory,
$$\psi_{i,r}=\pi_{r}^*\psi_{i,r-1}+\PD[D_{i,r}],$$
where $\pi_{r}:\CM_{g,r,A}(M)\to\CM_{g,r-1}(M)$ is the map which forgets the $r$.th marked point, $\psi_{i,r}$ is the 
$i$.th $\psi$-class on $\CM_{g,r,A}(M)$ and $D_{i,r}$ is the divisor in $\CM_{g,r,A}(M)$ of nodal curves with a constant 
bubble containing only the $i$.th and $r$.th marked points. \\

In the very same way as in the proof of the comparison formula in Gromov-Witten theory, it follows that in SFT we can 
indeed choose a collection of sections $(s_{i,r})$ in such a way that, for their zero set, we have
\begin{equation}\label{comparison}
s_{i,r}^{-1}(0) = \pi_{r}^{-1}(s_{i,r-1}^{-1}(0))+D_{i,r},
\end{equation}
where here the sum in the right hand side means union with the submanifold $D_{i,r}$, transversally intersecting 
$\pi_{r}^{-1}(s_{i,r-1}^{-1}(0))$. \\

The existence of such a choice of non-generic sections follows, as in 
Gromov-Witten theory, from the fact that the pullback bundle $\pi_{r}^*\LL_{i,r-1}$ agrees with the tautological bundle 
$\LL_{i,r}$ away from the submanifold $D_{i,r}$ in $\CM_r=\CM_{g,r,A}(\Gamma^+,\Gamma^-)/\IR$, together with the fact that 
the restriction of $\LL_{i,r}$ to $D_{i,r}$ is trivial and that the normal bundle to $D_{i,r}$ agrees with $\LL_{i,r-1}$.
Notice that such a choice of sections is intrinsically non-generic, the sets $s_{i,r}^{-1}(0)$ not being smooth, but 
union of smooth components intersecting transversally. \\

We now prove that such sections can be chosen to form a coherent collection. Assume we already proved that the above choice forms a coherent collection for all the moduli spaces with up to $r-1$ marked points. Starting with a section of such collection on $\CM_{r-1}$, we construct a section on $\CM_r$ with the above configuration of zeros by first pulling back $s_{i,r-1}$ to $\CM_r \setminus D_{i,r}$ and then use the bundle map between $\pi_{r}^*\LL_{i,r-1}$ and $\LL_{i,r}$ induced by a local coordinate on the underlying curve. Such map, as we already noticed, is a bundle isomorphism on $\CM_r \setminus D_{i,r}$ and becomes singular on $D_{i,r}$: the image of $\pi_r^*s_{i,r-1}$ under this map extends to the whole $\CM_r$ assuming the value zero on $D_{i,r}$. The zero appearing this way along $D_{i,r}$ has degree $1$ by the above considerations. Moreover the section is automatically coherent, not only at the boundary components which are preimage under $\pi_r$ of boundary components of $\CM_{r-1}$, but also at the extra boundary components appearing in the fibre direction, which are always disjoint from $D_{i,r}$. Notice also that such construction works because any codimension $1$ boundary of the moduli space $\CM_{g,r,A}(\Gamma^+,\Gamma^-)$ decomposes into a product of moduli spaces where the factor containing the $i$-th marked point carries the same well defined projection map $\pi_{r}$. This is because codimension $1$ boundary strata are always formed by non-constant maps, which remain stable after forgetting a marked point.\\

The base of such induction process is given by any coherent collection of sections for the moduli spaces $\CM_1$ with only one marked point, the $i$-th, carrying the psi-class.\\

In fact coherence also requires that our choice of coherent collection of sections is symmetric with respect to permutations of the marked points (other than the $i$-th, carrying the descendant). Indeed, reiterating the construction until we forget all of the marked points but the $i$-th, we get easily
$$s_{i,r}^{-1}(0)=(\pi_{1}^*\circ\ldots\circ\hat{\pi}_i^*\circ\ldots\circ\pi_{r}^* \, s_{i,1})^{-1}(0) + \sum_{\substack{I\sqcup J=\{1,\ldots,r\}\\ \{i\}\subsetneq I \subseteq \{1,\ldots,r\}}} D^\mathrm{const}_{(I|J)}$$
where $D^\mathrm{const}_{(I|J)}$ is the submanifold of nodal curves with a constant sphere bubble carrying the marked points labeled by indices in $I$. Such choice of coherent collections of sections is indeed symmetric with respect to permutation of the marked points.\\

In order to be able to speak about higher powers $\psi_{i,r}^j$ of the psi-classes, we will need to select $j$ coherent collections of sections $s^{(k)}_{i,r}$, $k=1,\ldots,j$, intersecting transversally. The descendant moduli space $\CM^{(0,...,j,...,0)}_{g,r,A}(\Gamma^+,\Gamma^-)/\IR
\subset\CM_{g,r,A}(\Gamma^+,\Gamma^-)/\IR$ will then be given by the intersection of their zero loci. We can start from the above special non-generic choice and perturb it (preserving coherence) to $s^{(1)}_{i,r}=s_{i,r}$ and $s^{(k)}_{i,r}$, $k=2,\ldots,j$ such that the following formula holds.
$$\bigcap_{k=1}^j (s^{(k)}_{i,r})^{-1}(0) = \bigcap_{k=1}^j (\pi_{r}^* s^{(k)}_{i,r-1})^{-1}(0) \,+ \, \bigcap_{k=2}^{j}(\pi_{r}^* s^{(k)}_{i,r-1}) \,\cap\, D_{i,r}$$
Notice that, as in Gromov-Witten theory, deducing this from the comparison formula (\ref{comparison}) is possible again because the restriction of $\LL_{i,r}$ to $D_{i,r}$ is trivial. \\

Before we can use these special choices of coherent collections of sections to prove the SFT analogues of the string, dilaton and divisor equations, we however finally have to make a short comment on the genericity of our special choices. \\

Recall that for the definition of gravitational descendants in \cite{F} we need to choose sections in the tautological bundles over all moduli spaces which are generic in the sense that they are transversal to zero section, so that, in particular, all zero divisors are smooth. On the other hand, as we outlined above, all our special choices of coherent collections of sections are automatically non-generic, since their zero sets localize on nodal curves and, in particular, are not smooth. \\

In order to see that we can still use our special non-generic choices for computations, we have to make use of the fact that, by using small perturbations, the special non-generic choice of coherent collections of sections can be approximated arbitrarily close (in the $C^1$-sense) by generic coherent collections of sections. While for two different coherent collections of sections the Hamiltonian in general depends on these choices as for a given homotopy (coherent collection of sections coherently connecting the two different choices in the sense of \cite{F}) zeroes may run out of the codimension-one boundaries of the moduli spaces, we can further make use of the fact that the latter can be prevented from happening as long as the perturbation is small enough, see also the picture below. Indeed, assuming by genericity that the original section $s_0$ has no zero on the boundary and denoting by $c\neq 0$ the minimal absolute value of $s_0$ on the boundary, it is easily seen that every section $s_t$ in the homotopy has no zero on the boundary as long as $\|s_t-s_0\|_{C^1}<c/2$.\\
\begin{figure}
\begin{center}
\includegraphics[width=6cm]{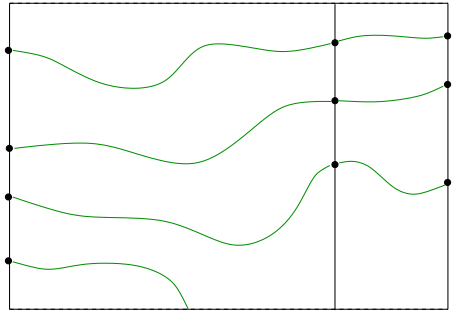}
\caption{The picture represents the trivial cobordism between a moduli space and itself (vertical black lines) and the corresponding cobordism for the zeros of coherent sections (green lines). The number of zeroes (black dots) in each copy of the moduli space may change during a homotopy (from left to right) as zeroes may run out of the codimension-one-boundary (dashed lines above and below). This, however, can be excluded as long as the homotopy is chosen sufficiently small (like the one between the middle and the right vertical lines).}
\end{center}
\end{figure}

Similarly to what happens for the gluing formulas for holomorphic curves in Floer theory (e.g. in \cite{Sch}), it then follows that the new Hamiltonians defined using these generic coherent collections of sections agree with the Hamiltonian defined using the original non-generic choices as long as the approximation error is sufficiently small, which in the gluing picture corresponds to the case of very large gluing parameter.

\vspace{0.8cm}

\subsection{Divisor equation}
As customary in Gromov-Witten theory we will assume that the chosen string of differential forms on $V$ contains a 
two-form $\theta_2$. Since by adding a marked point we increase the dimension of the moduli space by two, the integration 
of a two-form over it leaves the dimension unchanged and we can expect, as in Gromov-Witten theory, to compute the 
contributions to SFT Hamiltonian involving integration of $\theta_2$ in terms of contributions without integration, where 
the result should just depend on the homology class $A\in H_2(V)$ which can be assigned to the holomorphic curves in the 
corresponding connected component of the moduli space. \\

Recall that in order to assign an absolute homology class $A$ to a holomorphic curve $u:\Si\to\IR\times V$ we have to 
employ spanning surfaces $F_{\gamma}$ connecting a given closed Reeb orbit $\gamma$ in $V$ to a linear combination of 
circles $c_s$ representing a basis of $H_1(V)$,
\[\del F_{\gamma} = \gamma - \sum_s n_s\cdot c_s \]  
in order to define 
\[ A = [F_{\Gamma^+}] + [u(\Si)] - [F_{\Gamma^-}], \]
where $[F_{\Gamma^{\pm}}] = \sum_{n=1}^{s^{\pm}} [F_{\gamma^{\pm}_n}]$ viewed as singular chains. We might expect to find 
a result which is similar to the divisor equation in Gromov-Witten thoery whenever
\[ \int_{A}\theta_2 = \int_{u(\Si)}\theta_2, \]
that is, 
\[ \int_{F_{\Gamma^+}}\theta_2 - \int_{F_{\Gamma^-}}\theta_2 = 0 \]
which is however not satisfied, in general. \\ 

Instead of showing that it is possible to find for each class in $H^2(V)$ a nice representative which vanishes on all the 
spanning surfaces and hence meets the requirements, we want to prove a statement which holds for every chosen string of 
differential forms. Denote by $d_{\gamma}$ the integral of the differential form $\theta_2$ over the spanning surface 
of $\gamma$,
\[ d_{\gamma} = \int_{F_{\gamma}}\theta_2. \]
Denoting the $t$-variables assigned to $\theta_2$ by $t^{2,p}$ and assuming for notational simplicity that we have 
chosen a basis $A_0,...,A_N$ of $H_2(V)$ such that $\int_{A_i} \theta_2 = \delta_{0,i}$, with associated 
variables $z_0,...,z_N$, we prove the following

\begin{theorem}\label{divisor} With the above choice of non-generic coherent collections of sections, the following \emph{divisor equation} holds for the SFT Hamiltonian
$$\left(\frac{\del}{\del t^{2,0}}-z_0\frac{\del}{\del z_0}\right)\IH \;=\; \int_V t\wedge t\wedge \theta_2 + \sum_{k} t^{\alpha,k+1} c_{2\alpha}^\beta\frac{\del\IH}{\del t^{\beta, k}} +\, [\IH,\Delta],$$ 
where $c_{\gamma\alpha}^\beta$ are the structure constants of the cup product in $H^*(V)$ and where $\Delta\in\WW$ accounts for the chosen spanning surfaces and is given by
\[\Delta=\sum_{\gamma} d_{\gamma} p_{\gamma} q_{\gamma}.\]
\end{theorem}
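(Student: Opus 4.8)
The plan is to compute $\del\IH/\del t^{2,0}$ by pushing forward along the forgetful map and to organize the result into the terms on the right-hand side. Adding a marked point carrying $\theta_2$ with trivial descendant corresponds exactly to the operator $\del/\del t^{2,0}$, and the goal is to re-express the corresponding moduli integrals in terms of moduli spaces with one fewer marked point. First I would split the relevant moduli spaces $\CM_{g,r,A}(\Gamma^+,\Gamma^-)/\IR$ into those on which forgetting the extra $\theta_2$-point lands in a stable configuration and the unstable constant components, treating the two cases separately.

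For the stable part I would use the forgetful fibration $\pi_r$ whose generic fibre is the compactified surface $\BS$ described above, together with the non-generic coherent collections of sections constructed in the previous subsection. Integrating $\ev_r^*\theta_2$ over the fibre produces the factor $\int_{u(\Si)}\theta_2$ multiplying the contribution of the lower moduli space, which accounts for the naive divisor term. The key input is the comparison formula (\ref{comparison}) and its iterated descendant version: each class $\psi_{i,r}$ contributes $\pi_r^*\psi_{i,r-1}$ plus the divisor $D_{i,r}$ of curves with a constant bubble carrying the $i$-th and the new $r$-th point. The pulled-back piece combines with the fibre integration to give the naive term, while the $D_{i,r}$-piece, on which $\ev_r=\ev_i$ so that $\ev_r^*\theta_2=\ev_i^*\theta_2$ cups with $\theta_{\alpha_i}$ and lowers the descendant order by one, produces the correction $\sum_k t^{\alpha,k+1}c_{2\alpha}^\beta\,\del\IH/\del t^{\beta,k}$, where $c_{2\alpha}^\beta$ records the class of $\theta_2\wedge\theta_\alpha$ in the chosen basis of $H^*(V)$.

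Next I would rewrite the naive factor. Since the absolute class is $A=[F_{\Gamma^+}]+[u(\Si)]-[F_{\Gamma^-}]$, one has $\int_{u(\Si)}\theta_2=\int_A\theta_2-\sum_{\gamma^+}d_{\gamma^+}+\sum_{\gamma^-}d_{\gamma^-}$. With the normalization $\int_{A_i}\theta_2=\delta_{0,i}$, the term $\int_A\theta_2$ equals the $z_0$-degree of the monomial, hence contributes $z_0\,\del\IH/\del z_0$, which I move to the left-hand side. The remaining spanning-surface defect $-\sum_{\gamma^+}d_{\gamma^+}+\sum_{\gamma^-}d_{\gamma^-}$ is, monomial by monomial, the eigenvalue of the operator $[\,\cdot\,,\Delta]$ with $\Delta=\sum_\gamma d_\gamma p_\gamma q_\gamma$; this I would verify by a direct computation in $\WW$ using $[p_\gamma,q_\gamma]=\kappa_\gamma\hbar$ to check that commuting $\Delta$ past $p^{\Gamma^+}q^{\Gamma^-}$ reproduces exactly this weighted count of positive and negative punctures. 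Finally, the unstable constant components—genus-zero ghost curves with three special points, on which forgetting the $\theta_2$-point destabilizes—contribute the classical triple intersection $\int_V t\wedge t\wedge\theta_2$.

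I expect the main obstacle to be the rigorous treatment of the codimension-one boundary of the SFT moduli spaces, which is absent in Gromov-Witten theory. Precisely here the special non-generic coherent collections of sections are essential: one must check that the comparison formula (\ref{comparison}) holds at the level of zero loci across the broken configurations, i.e.\ that the descendant zero sets split coherently along the breaking, and—using the $C^1$-approximation argument of the previous subsection—that replacing the non-generic sections by nearby generic ones does not move zeroes out through the boundary, so that all the moduli integrals above are genuinely defined and the equation is an honest identity in $\hbar^{-1}\WW$ rather than only on homology. A secondary point requiring care is matching signs and $\hbar$-weights in the identification of the spanning-surface defect with $[\IH,\Delta]$.
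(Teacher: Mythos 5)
Your proposal follows essentially the same route as the paper's proof: applying the comparison formula (\ref{comparison}) together with fibre integration over the forgetful map to produce the naive term and the cup-product correction, isolating the constant three-point configurations for the $\int_V t\wedge t\wedge\theta_2$ term, and converting the spanning-surface defect $\int_{F_{\Gamma^+}}\theta_2-\int_{F_{\Gamma^-}}\theta_2$ into the operator $\sum_\gamma(d_\gamma p_\gamma\del/\del p_\gamma-d_\gamma q_\gamma\del/\del q_\gamma)=[\,\cdot\,,\Delta]$. The only detail the paper adds that you omit is the observation that orbit cylinders with a single marked point contribute nothing since a $2$-form cannot be integrated over the one-dimensional moduli space $\CM_{0,1}(\gamma,\gamma)/\IR$, but this does not affect the correctness of your argument.
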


\begin{proof} 
Using the comparison formula (\ref{comparison}), we compute, when the curve is not constant, not an orbit cylinder or 
whenever $r+|\Gamma^+|+|\Gamma^-|\geq 4$, as in the Gromov-Witten case
\begin{eqnarray*}
&&\int_{\CM^{(j_1,...,j_{r-1},0)}_{r,A}} \ev_1^*\theta_{\alpha_1}\wedge ... \wedge \ev_{r-1}^*\theta_{\alpha_{r-1}}
\wedge \ev_r^*\theta_2 \\
&&=\left(\int_{A}\theta_{2} - \int_{F_{\Gamma^+}}\theta_{2} + \int_{F_{\Gamma^-}}\theta_{2}\right) 
\int_{\CM^{(j_1,...,j_{r-1})}_{r-1,A}} \ev_1^*\theta_{\alpha_1}\wedge ... \wedge\ev_{r-1}^*\theta_{\alpha_{r-1}} \\
&&+\sum_{k=1}^{r-1} \int_{\CM^{(j_1,...,j_{r-1})}_{r-1,A}} \ev_1^*\theta_{\alpha_1}\wedge ... \wedge
\ev_k^*(\theta_2\wedge\theta_{\alpha_k})\wedge ... \wedge \ev_{r-1}^*\theta_{\alpha_{r-1}},
\end{eqnarray*}
where $\CM^{(j_1,...,j_r)}_{r,A}= \CM^{(j_1,...,j_r)}_{g,r,A}(\Gamma^+,\Gamma^-)/\IR$ denotes the component of the moduli 
space of curves representing the homology class $A\in H_2(V)$. Note that since we can assume that the Hamiltonian counts 
holomorphic curves with at least one puncture, we do not get contributions from constant curves. On the other hand, when the curve is constant and 
$r+|\Gamma^+|+|\Gamma^-|=3$ the integral is given by $\int_V t\wedge t\wedge \theta_2$ and in the case of orbit cylinders 
with only one marked point any correlator involving only a $2$-form vanishes for dimensional reasons. \\

Notice now that the differential operator multiplying each monomial containing $p^{\Gamma^+}q^{\Gamma^-}$ in $\IH$ by the coefficient
$$\int_{F_{\Gamma^+}}\theta_{2} - \int_{F_{\Gamma^-}}\theta_{2}$$
is precisely
$$\sum_{\gamma}\left(d_{\gamma}p_{\gamma}\frac{\del}{\del p_{\gamma}} -d_{\gamma}q_{\gamma}\frac{\del}{\del q_{\gamma}}\right)$$
This, together with 
\[ \sum_{\gamma}\left(d_{\gamma}p_{\gamma}\frac{\del\IH}{\del p_{\gamma}} -d_{\gamma}q_{\gamma}\frac{\del\IH}{\del q_{\gamma}}\right)
   = [\IH,\Delta] \]
yields the desired equation. 
\end{proof}

Note that even when we restrict to special choices for the differential forms and coherent collections of sections, the Hamiltonian 
$\IH$ itself still depends on all other choices like contact form, cylindrical almost complex structure and so on. With the above main 
application in mind it is even more important that we have the following

\begin{corollary}\label{cdivisor} For any choice of differential forms and coherent collections of sections the following \emph{divisor equation} holds when passing to SFT-homology 
\[\left(\frac{\del}{\del t^{2,0}} -z_0\frac{\del}{\del z_0}\right)\IH\;=\; \int_V t\wedge t\wedge \theta_2 + \sum_{k} t^{\alpha,k+1} c_{2\alpha}^\beta\frac{\del\IH}{\del t^{\beta, k}} \hspace{0.5cm} \;\in\; H_*(\hbar^{-1}\WW,[\IH,\cdot]),\] 
\end{corollary}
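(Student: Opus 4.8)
The plan is to deduce the homological identity for arbitrary auxiliary data from the chain-level identity of Theorem \ref{divisor}, which was established only for the special non-generic coherent collection of sections. I would proceed in two steps: first pass to homology for the special choice, where the extra term disappears, and then transport the resulting identity to an arbitrary choice by means of the invariance theorem established above.

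First I would observe that the only difference between Theorem \ref{divisor} and the corollary is the summand $[\IH,\Delta]$. Since $D=[\IH,\cdot]$ is the differential defining $H_*(\hbar^{-1}\WW,[\IH,\cdot])$ and $\Delta=\sum_\gamma d_\gamma p_\gamma q_\gamma\in\WW\subset\hbar^{-1}\WW$, we have $[\IH,\Delta]=D(\Delta)$, so this term is $D$-exact and represents the zero class. Before dropping it I would check that both sides are individually $D$-closed, so that the statement in homology is well posed: $\del\IH/\del t^{2,0}=\IH_{2,0}$ and each $\del\IH/\del t^{\beta,k}=\IH_{\beta,k}$ are $D$-closed by the identity $[\IH,\IH_{\alpha,p}]=0$ proven above, the factors $t^{\alpha,k+1}$ and the constant $\int_V t\wedge t\wedge\theta_2$ are central, and $z_0\del\IH/\del z_0$ is $D$-closed because applying the derivation $z_0\del/\del z_0$ (which commutes with the bracket, as $z_0$ is an external parameter) to the master equation $[\IH,\IH]=0$ gives $[\IH,z_0\del\IH/\del z_0]=0$ by the same degree argument used above. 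This already proves the corollary for the special coherent collection of sections.

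To reach an arbitrary choice I would invoke the invariance theorem: for two choices of auxiliary data the cobordism potential $\IF\in\hbar^{-1}\DD$ induces an isomorphism $\Phi$ of the homology algebras carrying $\IH^-_{\alpha,p}$ to $\IH^+_{\alpha,p}$. The key remark is that $\Phi$ is assembled from $F^\pm$, which act only on the Reeb-orbit variables $p_\gamma,q_\gamma$ through $e^{\pm\IF}$ and the differential-operator substitutions, while leaving the formal parameters $t_{\alpha,j}$ and $z_i$ untouched. Hence $\Phi$ is linear over the central subalgebra generated by the $t$- and $z$-variables, commutes with $\del/\del t^{\alpha,p}$, with multiplication by $t^{\alpha,k+1}$, and with the Euler operator $z_0\del/\del z_0$, and it fixes the constant $\int_V t\wedge t\wedge\theta_2$. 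Applying $\Phi$ to the special-choice identity in homology would then yield exactly the corollary for the arbitrary choice, provided one also knows that $\Phi$ sends the class of $\IH$ itself to the class of the new Hamiltonian, which follows from the fundamental identity $e^{\IF}\overleftarrow{\IH^+}=\overrightarrow{\IH^-}e^{\IF}$ in the same way as the invariance of the $\IH_{\alpha,p}$.

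The step I expect to be the main obstacle is exactly this last compatibility: verifying that the cobordism isomorphism intertwines not merely the individual Hamiltonians $\IH_{\alpha,p}$ but also the Euler-type operator $z_0\del/\del z_0$ applied to the full $\IH$, and fixes the constant-curve term, so that the entire divisor equation, and not only its separate summands, is transported. Concretely this amounts to checking that the fundamental identity and its $t$-derivative respect the $z_0$-grading filtration, which should hold because the cobordism $\IR\times V$ preserves the homology class recorded by $z_0$; it is precisely here that the proof genuinely uses that $z_0\del/\del z_0$ and $\int_V t\wedge t\wedge\theta_2$ are topological, choice-independent quantities rather than merely chain-level expressions.
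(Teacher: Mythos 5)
Your overall strategy coincides with the paper's: discard the exact term $[\IH,\Delta]=D(\Delta)$ to obtain the identity in homology for the special non-generic sections, then transport it to arbitrary auxiliary data via the cobordism isomorphism. The first step, including your check that each summand is separately closed, is fine. The gap is in the second step, specifically in your justification of why the isomorphism $(F^-_*)^{-1}\circ F^+_*$ intertwines the operator $D = z_0\,\del/\del z_0 + \sum_k t^{\alpha,k+1}c_{2\alpha}^{\beta}\,\del/\del t^{\beta,k}$ with the two Hamiltonians. You assert that $F^{\pm}$ act only on the Reeb-orbit variables while "leaving the formal parameters $t_{\alpha,j}$ and $z_i$ untouched," and hence commute with $\del/\del t^{\alpha,p}$ and $z_0\,\del/\del z_0$. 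This is false at the chain level: the cobordism potential $\IF$ is itself a power series in the $t$- and $z$-variables (it counts curves in the cobordism carrying marked points and homology classes), so differentiating $e^{\mp\IF}\overrightarrow{f}e^{\pm\IF}$ with respect to $t$ or $z_0$ produces extra terms involving $D\IF$. These correction terms are precisely what the paper's proof computes: applying $D$ to the fundamental identity $e^{\IF}\overleftarrow{\IH^+} - \overrightarrow{\IH^-}e^{\IF}=0$ via the graded Leibniz rule yields
\[
F^+(D\IH^+) - F^-(D\IH^-) \;=\; \pm\, D^{\IF}(D\IF),
\]
which is $D^{\IF}$-exact but not zero, so the covariance you need holds only after passing to homology, and the entire content of the argument is that the error term is exact.

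You correctly sense that this compatibility is the crux, but your proposed resolution --- that the fundamental identity "respects the $z_0$-grading filtration" because the cobordism preserves the recorded homology class --- does not address the problem: even granting any such filtration property, $\IF$ still depends nontrivially on the $t$- and $z$-variables, and the commutator of $D$ with conjugation by $e^{\IF}$ is genuinely nonzero. What makes the argument work is not a topological filtration but the purely algebraic fact that $D$ is a first-order graded differential operator in the $t$ and $z$ variables and therefore satisfies the Leibniz rule with respect to the products involved, so that all the unwanted terms assemble into $D^{\IF}$ applied to $D\IF$. The same Leibniz property is also what gives $[\IH,D\IH]=0$, i.e.\ well-definedness of $D\IH$ as a class in $H_*(\hbar^{-1}\WW,[\IH,\cdot])$, which you verified summand by summand; the paper obtains it in one stroke from the same mechanism.
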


\begin{proof}
First it follows from $[\IH,\Delta]=0\in H_*(\hbar^{-1}\WW,[\IH,\cdot])$ that the equation on SFT homology holds for our 
special choice of coherent collections of sections, in particular, is independent of the auxiliary choice of spanning surfaces in order 
to assign absolute homology classes to punctured holomorphic curves. \\

We redenote by $\IH^+$ the Hamiltonian used in theorem \ref{divisor} and coming 
from the special choice of coherent collections of sections and auxiliary data we made there. To prove that the desired equation holds up to homology for any choice of 
coherent collections of sections and any other auxiliary data, leading to a new Hamiltonian $\IH^-$, we just need to check that its 
terms are properly covariant with respect to the isomorphism $F^-_*\circ (F^+_*)^{-1}: H_*(\hbar^{-1}\WW^+,[\IH^+,\cdot])
\to H_*(\hbar^{-1}\WW^-,[\IH^-,\cdot])$. \\

Indeed it more generally follows from the computation at the end of the previous 
section that, if $D$ is any first order graded differential operator in the $t$ and $z$ variables, then we have
$(F^-_*\circ(F^+_*)^{-1})(D\IH^+) = D\IH^-$, so that in particular
$\IH^+_{\alpha,p}=D\IH^+\in H_*(\hbar^{-1}\WW^+,[\IH^+,\cdot])$  
implies $\IH^-_{\alpha,p}=D\IH^-\in H_*(\hbar^{-1}\WW^-,[\IH^-,\cdot])$. \\

To be more precise, this follows from the fact that $D$ given by 
\[ D = z_0\frac{\del}{\del z_0} + \sum_{k} t^{\alpha,k+1} c_{2\alpha}^\beta\frac{\del}{\del t^{\beta, k}} \]
satisfies like $\del/\del t_{\alpha,p}$ the (graded) Leibniz rule, that is, 
we have the two identities 
\begin{equation*} [\IH,D\IH]= 0, \end{equation*} 
so that $D\IH\in H_*(\hbar^{-1}\WW,[\IH,\cdot])$, and, if $\IF$ is the potential for the cobordism connecting the different choices of auxiliary data, 
\begin{eqnarray*}
&& e^{\IF}(D\overleftarrow{\IH}^+)e^{-\IF} - e^{-\IF}(D\overrightarrow{\IH}^-)e^{\IF}\\
&& + (e^{\IF}D\IF) \overleftarrow{\IH}^+ e^{-\IF} + e^{\IF} \overleftarrow{\IH}^+ e^{-\IF}D\IF 
- e^{-\IF}D\IF \overrightarrow{\IH}^- e^{\IF} + e^{-\IF} \overrightarrow{\IH}^- (e^{\IF}D\IF) \\
&& = D(e^{\IF} \overleftarrow{\IH}^+ e^{-\IF} - e^{-\IF} \overrightarrow{\IH}^- e^{\IF}) = 0, 
\end{eqnarray*}
which implies as before $F^+_*(D\IH^+) = F^-_*(D\IH^-)$. For the computations note that the degree of $D$ is zero and hence even. Finally the term accounting for constant curves is even invariant as it is mapped to itself by $F^-_*\circ(F^+_*)^{-1}$.\end{proof}

Note that when we specialize to $t=0$ the above equation simplifies to 
\begin{equation*} \IH^1_{2,0} = \frac{\del \IH^0}{\del z_0} \;\in\; H_*(\hbar^{-1}\WW^0,[\IH^0,\cdot])
\end{equation*} 
and hence allows for the computation of one of the Hamiltonians $\IH^1_{\alpha,p}\in 
H_*(\hbar^{-1}\WW^0,[\IH^0,\cdot])$ in terms of the Hamiltonian $\IH^0$ counting holomorphic curves without marked points. \\

{\bf Remark:} If the dimension of $V$ is large enough, we indeed find for every $\theta\in\Omega^2(V)$ another 
differential 2-form $\theta^0$ with $[\theta^0]=[\theta]\in H^2(V)$ which vanishes on all the spanning surface 
$F_{\gamma}$. Under the assumption that all the spanning surfaces can be chosen to be embedded and pairwise disjoint, 
which leads to the requirement on the dimension of $V$, the statement follows by modifying the differential form 
inductively after proving it for the spanning surface of a single orbit $\gamma$.  Indeed, for chosen 
$\theta\in\Omega^2(V)$ let $\theta_{\gamma} = \iota_{\gamma}^*\theta$ denote the pullback under the embedding of 
$F_{\gamma}$ into $V$. Since every 2-form on a surface with boundary is neccessarily exact, we can choose a (primitive) 
1-form $\lambda_{\gamma}\in\Omega^1(F_{\gamma})$ with $\theta_{\gamma}=d\lambda_{\gamma}$ which we extend to a one-form 
$\lambda$ on $V$ with support only in a small neighborhood of $F_{\gamma}$. Since 
$\iota_{\gamma}^*(\theta - d\lambda) = \theta_{\gamma} - d\lambda_{\gamma} = 0$, it follows that 
$\theta^0:=\theta-d\lambda$ meets the desired requirements.

\vspace{0.5cm}

\subsection{Dilaton equation}
The next equation we will study is the dilaton equation.
\begin{theorem}
For any choice of coherent collections of sections the following \emph{dilaton equation} holds for the SFT Hamiltonian when passing to 
SFT-homology$$\frac{\del}{\del t^{0,1}}\IH\;=\; \ID_{\mathrm{Euler}}\IH  \;\in\, H_*(\hbar^{-1}\WW,[\IH,\cdot])$$
with the first-order differential operator 
$$\ID_{\mathrm{Euler}} := -2\hbar\frac{\del}{\del\hbar}-\sum_\gamma p_\gamma\frac{\del}{\del p_\gamma}
-\sum_\gamma q_\gamma\frac{\del}{\del q_\gamma}-\sum_{\alpha,p}t^{\alpha,p}\frac{\del}{\del t^{\alpha,p}}.$$
The same equation holds at the chain level for the above special choice of non-generic coherent collections of sections.
\end{theorem}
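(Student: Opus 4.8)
The plan is to mirror the divisor equation: first prove the identity at the chain level for the special non-generic sections (the last sentence of the statement), then propagate it to homology. For the chain-level part I would use the forgetful fibration $\pi_r\colon\CM_r\to\CM_{r-1}$ that drops the marked point carrying the dilaton insertion (the unit form $\mathbf{1}$ together with one power of $\LL_{r,r}$). Since $\ev_r^*\mathbf{1}=1$ imposes no constraint, the coefficient of a monomial in $\frac{\del}{\del t^{0,1}}\IH$ is the integral over the descendant locus $(s_{r,r})^{-1}(0)$ of forms pulled back from $\CM_{r-1}$, which by the projection formula equals the fibrewise zero count of $s_{r,r}$ times the matching coefficient of $\IH$. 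The heart of the argument is thus the fibre computation
\[\pi_{r*}\,c_1(\LL_{r,r})=\chi\bigl(\bar{S}\setminus\{\text{remaining marked points}\}\bigr)=2-2g-(n^++n^-)-(r-1),\]
derived exactly as in the preceding subsection: on a fibre $\bar{S}$ the special section is a section of the relative cotangent bundle with a singular point of index $-1$ at each of the $n^++n^-$ punctures and at each of the $r-1$ remaining marked points, so its signed zero count is the Euler characteristic of the fibre with those marked points deleted. This Euler characteristic is precisely the eigenvalue of $\ID_{\mathrm{Euler}}$ on the matching monomial $\hbar^{g-1}p^{\Gamma^+}q^{\Gamma^-}t^I$ of $\IH$, which is the chain-level identity. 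The two minimal strata are immediate: over orbit cylinders the fibre is a cylinder with $\chi=0$, and on the rigid three-pointed sphere ($r+|\Gamma^+|+|\Gamma^-|=3$) one has $c_1(\LL)=0$; both sides vanish and—unlike the divisor case—no constant-curve correction survives.

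To promote this to homology for an arbitrary coherent collection of sections I would run the covariance argument of the divisor Corollary. The decisive point is that $\ID_{\mathrm{Euler}}$ is a \emph{derivation of the Weyl product} $\star$: its part $-2\hbar\del_\hbar-\sum_\gamma p_\gamma\del_{p_\gamma}-\sum_\gamma q_\gamma\del_{q_\gamma}$ is the weight operator (weight $1$ on $p_\gamma,q_\gamma$, weight $2$ on $\hbar$), a derivation precisely because the relation $[p_\gamma,q_\gamma]=\kappa_\gamma\hbar$ is weight-homogeneous, while $-\sum_{\alpha,p}t^{\alpha,p}\del_{t^{\alpha,p}}$ is the $t$-degree. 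As an even derivation $\ID_{\mathrm{Euler}}$ obeys the graded Leibniz rule, so $\ID_{\mathrm{Euler}}[\IH,\IH]=2[\ID_{\mathrm{Euler}}\IH,\IH]=0$ shows $\ID_{\mathrm{Euler}}\IH\in H_*(\hbar^{-1}\WW,[\IH,\cdot])$. Applying $\ID_{\mathrm{Euler}}$ to the fundamental identity $e^{\IF}\overleftarrow{\IH^+}-\overrightarrow{\IH^-}e^{\IF}=0$ and using $\ID_{\mathrm{Euler}}e^{\IF}=e^{\IF}\ID_{\mathrm{Euler}}\IF$, the same rearrangement as in the divisor computation gives
\[F^+(\ID_{\mathrm{Euler}}\IH^+)-F^-(\ID_{\mathrm{Euler}}\IH^-)=D^{\IF}\bigl(\ID_{\mathrm{Euler}}\IF\bigr),\]
hence $F^+_*(\ID_{\mathrm{Euler}}\IH^+)=F^-_*(\ID_{\mathrm{Euler}}\IH^-)$. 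Combining this with the Section~2 covariance $F^+_*(\IH^+_{0,1})=F^-_*(\IH^-_{0,1})$ of the $t^{0,1}$-derivative and with the chain-level identity $\IH^+_{0,1}=\ID_{\mathrm{Euler}}\IH^+$ just proved for the special choice $+$, one reads off $F^-_*(\IH^-_{0,1})=F^-_*(\ID_{\mathrm{Euler}}\IH^-)$; since $F^-_*$ is an isomorphism this is the asserted identity $\frac{\del}{\del t^{0,1}}\IH=\ID_{\mathrm{Euler}}\IH$ in homology for the arbitrary choice $-$.

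The step demanding the most care is this covariance computation. In the divisor Corollary the operator $z_0\del_{z_0}+\sum_k t^{\alpha,k+1}c_{2\alpha}^\beta\del_{t^{\beta,k}}$ touches only the inert parameters $t,z$ and hence commutes outright with the insertions $\overleftarrow{\IH^+},\overrightarrow{\IH^-}$ and with conjugation by $e^{\pm\IF}$; by contrast $\ID_{\mathrm{Euler}}$ differentiates the noncommuting variables $p_\gamma,q_\gamma,\hbar$. What saves the argument is the identity $\ID_{\mathrm{Euler}}(g\,\overleftarrow{f})=(\ID_{\mathrm{Euler}}g)\,\overleftarrow{f}+g\,\overleftarrow{\ID_{\mathrm{Euler}}f}$ (and its left-acting analogue), valid because the substitution $q^+_\gamma\mapsto\kappa_\gamma\hbar\overleftarrow{\del_{p^+_\gamma}}$ defining $F^\pm$ preserves weight; geometrically this reflects the additivity of the Euler characteristic under gluing along Reeb orbits, each gluing circle contributing $\chi(S^1)=0$. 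Verifying that after this rearrangement all correction terms collapse into the single coboundary $D^{\IF}(\ID_{\mathrm{Euler}}\IF)$—and not merely into a formally compatible expression—is the genuinely delicate point.
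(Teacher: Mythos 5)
Your proposal follows the paper's own proof essentially step for step: the chain-level identity comes from the fibrewise zero count of the special coherent section of $\LL_{r,r}$ along the forgetful fibration $\pi_r$, with the index $-1$ singularities at the compactified punctures (forced by the constant choice of section over $\CM_{0,1}(\gamma,\gamma)/\IR$) converting the relative canonical degree into the Euler characteristic that $\ID_{\mathrm{Euler}}$ extracts from each monomial $\hbar^{g-1}t^Ip^{\Gamma^+}q^{\Gamma^-}$; the passage to homology for arbitrary auxiliary data is exactly the paper's covariance argument, resting on the fact that $\ID_{\mathrm{Euler}}$ is a derivation of the bracket so that it can be pushed through the fundamental identity $e^{\IF}\overleftarrow{\IH^+}-\overrightarrow{\IH^-}e^{\IF}=0$ to produce the coboundary $D^{\IF}(\ID_{\mathrm{Euler}}\IF)$. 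Your algebraic justification of the derivation property (weight-homogeneity of the relation $[p_\gamma,q_\gamma]=\kappa_\gamma\hbar$, weight $1$ on $p_\gamma,q_\gamma$ and $2$ on $\hbar$) is a cleaner restatement of the paper's geometric one (additivity of the Euler characteristic under gluing along Reeb orbits); making explicit the final combination with the Section 2 covariance of $\del\IH/\del t^{0,1}$ is also a worthwhile addition.

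There is one omission in your case analysis of the strata where $\pi_r$ fails to be defined. The stratum that actually matters is the constant genus-one curve with a single marked point: this is precisely the configuration responsible for the nonvanishing correction $\langle\tau_1\rangle_{1,0}$ (proportional to the Euler characteristic of the target) in the Gromov--Witten dilaton equation, so the assertion that ``no constant-curve correction survives'' cannot be read off from the two minimal strata you list (orbit cylinders, rigid three-pointed spheres), both of which are indeed harmless for the reasons you give. The paper disposes of the genus-one constant stratum by a dimension count: in SFT the relevant moduli space has virtual dimension one (the index bookkeeping in $\IR\times V$ differs from the Gromov--Witten one), so it contributes nothing by index reasons. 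Adding that one sentence closes the gap; everything else in your argument is the paper's proof.
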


\begin{proof}
With our special choice of non-generic coherent collections of sections still standing, the proof is precisely the same as in 
Gromov-Witten theory. We want to compute the integral
$$\int_{\CM^{(j_1,...,j_{r-1},1)}_r} \ev_1^*\theta_{i_1}\wedge ... \wedge\ev_{r-1}^*\theta_{i_{r-1}}.$$
Notice that the tautological bundle $\LL_{r,r}$ restricted on the fibre of the forgetful fibration $\pi_r$ coincides 
with $\omega+z_1+\ldots+z_r$, where $\omega$ is the canonical bundle and $z_1,\ldots,z_r$ are the marked points. Since 
the generic fiber is a smooth curve with $|\Gamma^+|+|\Gamma^-|$ holes and since, by our proper choice of sections for 
$\LL_{1,1}$ on the simplest moduli space of orbit cylinders with one marked point, coherence at such holes is equivalent 
to closing the holes and imposing an extra pole there, we can argue in the very same way as in Gromov-Witten theory. \\

Finally we would need to separately consider the cases where the forgetful fibration $\pi_r$ is not defined: as in 
Gromov-Witten theory only constant curves of genus one with one marked point might give a contribution, but in SFT such moduli space 
has virtual dimension one and we hence get no contribution by index reasons. Translating this into differential operators on the 
Hamiltonian yields the desired equation. \\

To prove that the same equation holds for any choice of auxiliary data when passing to SFT-homology we need to check 
covariance of the right hand side with respect to $F^-_*\circ (F^+_*)^{-1}: H_*(\hbar^{-1}\WW^+,[\IH^+,\cdot])\to 
H_*(\hbar^{-1}\WW^-,[\IH^-,\cdot])$, as in corollary \ref{cdivisor}. This time $\ID_{\mathrm{Euler}}$ is not a first order 
differential operator in the $t$ and $z$ variables, but also involves $p$ and $q$ variables and the variable $\hbar$ for the 
genus. \\

While all but the last summands of $\ID_{\mathrm{Euler}}$,
$$\ID_{\mathrm{Euler}} =  - 2\hbar\frac{\del}{\del\hbar} - \sum_\gamma p_\gamma\frac{\del}{\del p_\gamma}
- \sum_\gamma q_\gamma\frac{\del}{\del q_\gamma}-\sum_{\alpha,p}t^{\alpha,p}\frac{\del}{\del t^{\alpha,p}}$$ 
do not satisfy the desired Leibniz rule with respect to the bracket, the sum operator $\ID_{\mathrm{Euler}}$ 
has the desired property thanks to the fact that it extracts the Euler characteristic of the corresponding curves 
from each monomial in the variables 
$t,p,q,\hbar$. \\

Indeed, additivity of the Euler characteristic with respect to gluing straightforwardly shows that 
$\ID_{\mathrm{Euler}}$ satisfies the Leibniz rule, that is, as in the proof of the divisor equation 
we have the two identities 
\begin{equation*} [\IH,\ID_{\mathrm{Euler}}\IH]=\ID_{\mathrm{Euler}}[\IH,\IH] = 0, \end{equation*} 
so that $\ID_{\mathrm{Euler}}\IH\in H_*(\hbar^{-1}\WW,[\IH,\cdot])$, and, if $\IF$ is the potential for the cobordism connecting 
the different choices of auxiliary data, 
\begin{eqnarray*}
&& e^{\IF}(\ID_{\mathrm{Euler}}\overleftarrow{\IH}^+)e^{-\IF} - e^{-\IF}(\ID_{\mathrm{Euler}}\overrightarrow{\IH}^-)e^{\IF}\\
&& + (e^{\IF}\ID_{\mathrm{Euler}}\IF) \overleftarrow{\IH}^+ e^{-\IF} + e^{\IF} \overleftarrow{\IH}^+ e^{-\IF}\ID_{\mathrm{Euler}}\IF \\
&& - e^{-\IF}\ID_{\mathrm{Euler}}\IF \overrightarrow{\IH}^- e^{\IF} + e^{-\IF} \overrightarrow{\IH}^- (e^{\IF}\ID_{\mathrm{Euler}}\IF) \\
&& = \ID_{\mathrm{Euler}}(e^{\IF} \overleftarrow{\IH}^+ e^{-\IF} - e^{-\IF} \overrightarrow{\IH}^- e^{\IF}) = 0, 
\end{eqnarray*}
which implies as before $F^+_*(\ID_{\mathrm{Euler}} \IH^+) = F^-_*(\ID_{\mathrm{Euler}} \IH^-)$. 
\end{proof}

Note that when we specialize to $t=0$ the above equation yields the identity 
\begin{equation*} \IH^1_{0,1} = \ID_{\mathrm{Euler}}\IH^0 \;\in\; H_*(\hbar^{-1}\WW^0,[\IH^0,\cdot])
\end{equation*} 
and hence allows for the computation of a second one of the Hamiltonians $\IH^1_{\alpha,p}\in 
H_*(\hbar^{-1}\WW^0,[\IH^0,\cdot])$ in terms of the original Hamiltonian $\IH^0$ counting holomorphic curves without marked points. \\
\vspace{0.5cm}

\subsection{String equation}
It just remains to understand how the string equation translates from Gromov-Witten theory to SFT. Indeed string equation 
is an even more straightforward application of the comparison formula (\ref{comparison}) and, reasoning along the same 
line as in the proof of divisor equation (included the covariance statement), we easily get the following theorem.

\begin{theorem}
For any choice of coherent collections of sections the following \emph{string equation} holds for the SFT Hamiltonian when passing to SFT-homology
$$\frac{\del}{\del t^{0,0}}\IH\;=\; \int_V t\wedge t  + \sum_{k}t^{\alpha,k+1}\frac{\del}{\del t^{\alpha,k}}\IH 
 \;\in\; H_*(\hbar^{-1}\WW,[\IH,\cdot]),$$ 
The same equation holds at the chain level for the above special choice of non-generic coherent collections of sections.
\end{theorem}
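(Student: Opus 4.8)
The plan is to follow the same two-stage strategy as in the divisor and dilaton proofs: first establish the identity at the chain level for the special non-generic coherent collections of sections by a direct application of the comparison formula (\ref{comparison}), and then promote it to SFT homology for arbitrary auxiliary data by the covariance argument of Corollary \ref{cdivisor}.

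For the chain-level statement I would read $\del/\del t^{0,0}$ as the operation of adjoining one extra marked point, say the $r$-th, carrying the unit form $\theta_0=\mathbf 1$ with trivial descendant $\psi^0$. Since $\ev_r^*\theta_0\equiv 1$, the integrand over $\CM^{(j_1,\dots,j_{r-1},0)}_r$ is constant along the fibres of the forgetful fibration $\pi_r$, so I would integrate over the fibre first. Applying (\ref{comparison}) to each remaining class $\psi_{i,r}^{j_i}$ with $i<r$, and using that the restriction of $\LL_{i,r}$ to $D_{i,r}$ is trivial, the pulled-back part $\pi_r^*\psi_{i,r-1}^{j_i}$ pushes forward to zero along the positive-dimensional fibre, while the divisor $D_{i,r}$—being a section of $\pi_r$—contributes exactly the integral over $\CM_{r-1}$ with the $i$-th descendant index lowered by one. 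Summed over $i<r$, this reproduces the operator $\sum_k t^{\alpha,k+1}\,\del/\del t^{\alpha,k}$ applied to $\IH$.

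Next I would isolate the unstable configurations for which $\pi_r$ is undefined. Exactly as in the divisor and dilaton computations, orbit cylinders with a single marked point contribute nothing, since the unit integrand forces the correlator to vanish for dimensional reasons, and the only surviving unstable curves are the constant ones with $r+|\Gamma^+|+|\Gamma^-|=3$, whose contributions assemble into the term $\int_V t\wedge t$. Combining the fibre-integration term with this unstable term yields the string equation at the chain level for the special choice of sections.

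To pass to homology for arbitrary auxiliary data, I would invoke the general covariance statement of Corollary \ref{cdivisor}: the right-hand operator $\sum_k t^{\alpha,k+1}\,\del/\del t^{\alpha,k}$ is a first-order graded differential operator purely in the $t$-variables, so $(F^-_*\circ(F^+_*)^{-1})$ intertwines its action on $\IH^+$ and $\IH^-$ without further work, while the constant-curve term $\int_V t\wedge t$ is mapped to itself and $\del/\del t^{0,0}$ is already known to descend covariantly from the computation at the end of Section 2. I expect the only genuine subtlety—present but mild here—to be the bookkeeping of the unstable strata, namely checking that no ghost or boundary contributions survive beyond $\int_V t\wedge t$; the covariance half is essentially automatic, since, unlike $\ID_{\mathrm{Euler}}$, the string operator involves neither $\hbar$ nor the $p,q$ variables and thus falls squarely within the class covered by Corollary \ref{cdivisor}.
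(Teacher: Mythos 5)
Your proposal is correct and follows essentially the same route as the paper: the paper's own ``proof'' is only the remark that the string equation is an even more straightforward application of the comparison formula (\ref{comparison}) together with the same covariance argument as for the divisor equation, and your chain-level fibre-integration computation (push-forward of the pulled-back classes vanishing, the divisors $D_{i,r}$ acting as sections of $\pi_r$ and lowering the descendant index, the constant-curve term $\int_V t\wedge t$, and the dimensional vanishing of the one-marked-point orbit-cylinder contribution) plus the appeal to Corollary \ref{cdivisor} is precisely the standard Gromov--Witten filling-in of that remark. Nothing in your argument deviates from what the paper intends, and you correctly note that, unlike for the dilaton equation, the covariance step is automatic since the string operator is first order in the $t$-variables only.
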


Observe that when we specialize to $t=0$ we now get the obvious result $\IH^1_{0,0}= 0$.

\vspace{0.5cm}

\end{document}